\newcounter{tab}[section]
\newcounter{fig}[section]
\newtheorem{defi}{Definition}[section]
\newtheorem{coro}[defi]{Corollary}
\newtheorem{theo}[defi]{Theorem} 
\newtheorem{remark}[defi]{Remark}
\newtheorem{example}[defi]{Example}
\numberwithin{equation}{section}
\newcounter{examp}[section]
\def\bfze{\mbox{\boldmath $0$}}
\def\bft{\mbox{\boldmath $t$}}
\def\bfx{\mbox{\boldmath $x$}}
\def\bfa{\mbox{\boldmath $a$}}
\def\bfb{\mbox{\boldmath $b$}}
\def\bfz{\mbox{\boldmath $z$}}
\def\bfm{\mbox{\boldmath $m$}}
\def\bfs{\mbox{\boldmath $s$}}
\def\bfI{\mbox{\boldmath $I$}}
\begin{document}
\thispagestyle{empty}
\noindent
{\bf \Large 
A note on gaussian distributions in $\mathbb{R}^n$
}
\vspace{0.5cm} \\
{
B.G. MANJUNATH and K.R. PARTHASARATHY 
}
\vspace{0.5cm} \\
{\small 
Indian Statistical Institute, Delhi Centre, 7, S. J. S. Sansanwal Marg, \\
New Delhi -- 110016, India \\
E--mail : bgmanjunath@gmail.com; krp@isid.ac.in 
}
\vspace{0.4cm}

\noindent
{\small
{\bf Abstract.} Given any finite set $\mathcal{F}$ of $(n-1)$-- dimensional subspaces of $\mathbb{R}^n$ we give
examples of nongaussian probability measures in $\mathbb{R}^n$ whose marginal distribution in each subspace from
$\mathcal{F}$ is gaussian.  However, if $\mathcal{F}$ is an infinite family of such $(n-1)$--dimensional subspaces then
such a nongaussian probability measure in $\mathbb{R}^n$ does not exist.
}

\vspace{0.4cm}

\noindent
{\small
{\bf Key words.} gaussian distribution, characteristic function, homogeneous polynomial, linear functionals, nonunimodality, 
Hermite polynomial
}

\vspace{0.4cm}
\noindent
{\small
{AMS 1991} subject classification: primary, 60G15, 60E10; secondary, 62E15
}


\section{Introduction}
Starting with the simple example of E. Nelson as cited by W. Feller in \cite{wf66} we have from the papers of B.K. Kale \cite{k70}, G.G. Hamedani and
M.N. Tata \cite{ht75} and Y. Shao and M. Zhou \cite{sz10} etc., as well as Section 10 of
J. Stoyanov's book \cite{s87}, several examples of bivariate and multivariate nongaussian distributions
under which many linear functionals can have a gaussian distribution on the real line.  These results suggest the possibility
of characterizing a gaussian distribution in $\mathbb{R}^n$ through properties of classes of linear functionals.  Motivated by Nelson's
example in \cite{wf66} and the bivariate construction in \cite{ht75} we introduce a perturbation of the standard
gaussian density function in $\mathbb{R}^n$ exhibiting the following interesting features: (1) Given any finite set
$\{S_j, 1 \leq j \leq N \}$ of $(n-1)$--dimensional subspaces it has a marginal density function which is standard gaussian in each 
$S_j$, $j\in\{1,2,...,N\}$; (2) There can exist linear functionals whose distributions may have nonunimodal density functions; (3) For certain
choices of subspaces the nongaussian perturbation can be so chosen that any real symmetric measurable function of all the 
$n$ coordinates has its distribution preserved.  In particular, the sum of squares of all the coordinates can have the 
$\chi^2$ distribution with $n$ degrees of freedom.

We also demonstrate the following characterization of the multivariate gaussian distribution.  Suppose $\{S_j, j=1,2,...\}$ is a
countably infinite set of $(n-1)$--dimensional subspaces of $\mathbb{R}^n$ and $\mu$ is a probability measure in $\mathbb{R}^n$
such that the projection of $\mu$ in each subspace $S_j$ is gaussian.  Then $\mu$ itself is gaussian.  This is a generalization
of the characterization in \cite{ht75} and a more precise version of the result in \cite{sz10}.

Our proofs follow the steps in \cite{ht75} and use some additional geometric and topological arguments of a very elementary kind.

\section{A perturbation of the gaussian \\ characteristic function}\label{intro}
We begin by examining a small perturbation of the characteristic function of the $n$--variate standard gaussian
distribution with mean vector $\bfze$ and covariance matrix $\bfI$ as follows.  Choose and fix any homogeneous polynomial $\mathcal{P}$
of even degree $2k$ in $n$ real variables $t_1,t_2,...,t_n$ and define

\begin{eqnarray} \label{1.1}
\Phi (\bft;\varepsilon,\sigma,\mathcal{P})   = e^{-\frac{1}{2} |\bft|^2} + \varepsilon \, \, e^{-\frac{1}{2} \sigma^2 |\bft|^2} \mathcal{P}(\bft), \mbox{  } 
\bft \in \mathbb{R}^n
\end{eqnarray}

\noindent
where $\bft = (t_1,...,t_n)^T$, $\varepsilon$ is a real parameter and $\sigma$ is a parameter satisfying $0<\sigma<1$.
Here
\begin{eqnarray*}
 |\bft|^2 = (t^2_1 + ... + t^2_n).
\end{eqnarray*}

\noindent
Clearly, $\Phi(\cdot;\varepsilon,\sigma,\mathcal{P})$ is a real analytic function on $\mathbb{R}^n$ satisfying

\begin{eqnarray} \label{1.2}
\Phi(\bfze;\varepsilon , \sigma,\mathcal{P})  &=& 1,   \nonumber \\ 
\Phi(-\bft;\varepsilon , \sigma,\mathcal{P})  &=& \Phi(\bft; \varepsilon , \sigma,\mathcal{P}). 
\end{eqnarray}

\noindent
Let 
\begin{eqnarray} \label{1.3}
 Z_\mathcal{P} = \{ \bft | \mathcal{P}(\bft) = 0, \bft \in \mathbb{R}^n \}
\end{eqnarray}
be the set of zeros of $\mathcal{P}$ in $\mathbb{R}^n$.

Since we are interested in the inverse Fourier transform of $\Phi$ we introduce the renormalized polynomial
$:\mathcal{P}:$  in the form of a formal definition.

\begin{defi}
 Let 
\begin{eqnarray*}
\mathfrak{N}(x) = \frac{1}{\sqrt{2\pi}} e^{-\frac{1}{2}x^2}
\end{eqnarray*}
and let $H_m(x)$ be the $m$-th Hermite polynomial defined by
\begin{eqnarray*}
 \frac{d^m}{dx^m} \mathfrak{N}(x) = (-1)^m H_m(x) \mathfrak{N}(x), \mbox{   } m = 0,1,2,...
\end{eqnarray*}
(as in Feller \cite{wf66}).  For any real polynomial $\mathcal{P}$ in $n$ real variables given by
\begin{eqnarray*}
 \mathcal{P}(t_1,t_2,...,t_n) = \sum_{\bfm}  \, \, a_{m_1,m_2,...,m_n} t^{m_1}_1 t^{m_2}_2 ... t^{m_n}_n
\end{eqnarray*}
its renormalized version $:\mathcal{P}:$ is defined by
\begin{eqnarray*}
 :\mathcal{P}:(x_1,...,x_n) = \sum_{\bfm} \, \,a_{m_1,m_2,...,m_n} H_{m_1} (x_1) H_{m_2} (x_2)...H_{m_n} (x_n).
\end{eqnarray*}
Note that for a homogeneous polynomial, its renormalized version need not be homogeneous.
\end{defi}

Since the function $\Phi$ in (\ref{1.1}) is in $\mathbb{L}_1 (\mathbb{R}^n)$ its inverse Fourier transform 
$f$ is defined by
\begin{eqnarray}\label{1.4}
f(\bfx; \varepsilon,\sigma,\mathcal{P})  &=& \frac{1}{(2\pi)^n} \int e^{-i\bft^T \bfx} \Phi(\bft ; \varepsilon,\sigma,\mathcal{P}) 
dt_1 dt_2 ... dt_n  \nonumber \\
&=& \frac{1}{(\sqrt{2\pi})^n} e^{-\frac{1}{2}|\bfx|^2} + \varepsilon \, \,  \frac{1}{(2\pi)^n} \int e^{-i \bft^T \bfx} 
e^{- \frac{1}{2} \sigma^2 |\bft|^2}  \mathcal{P}(\bft) dt_1 ... dt_n. \nonumber \\
\end{eqnarray}
First, we note that

\begin{eqnarray*}
 \frac{1}{(2\pi)^n} \int e^{-i \bft^T \bfx} 
e^{- \frac{1}{2} \sigma^2 |\bft|^2}  dt_1 dt_2 ... dt_n =  \frac{1}{\sigma^n} \prod^n_{j=1} \mathfrak{N} \left( \frac{x_j}{\sigma}  \right).
\end{eqnarray*}
Repeated differentiation with respect to $x_1,x_2,...,x_n$ shows that for the homogeneous polynomial $\mathcal{P}$
of degree $2k$ we have
 
\begin{eqnarray*}
 && \frac{1}{(2\pi)^n} \int e^{-i \bft^T \bfx} 
e^{- \frac{1}{2} \sigma^2 |\bft|^2}  \mathcal{P}(\bft) dt_1 dt_2 ... dt_n  \\
 && = \frac{1}{\sigma^n} \mathcal{P} \left( i\frac{\partial}{\partial x_1}
,...,i\frac{\partial}{\partial x_n} \right ) \left\{ \prod^n_{j=1}\mathfrak{N} \left( \frac{x_j}{\sigma}  \right) \right\} \\
&& =  \frac{(-1)^k}{\sigma^{n+2k}} :\mathcal{P}:\left( \frac{x_1}{\sigma},...,\frac{x_n}{\sigma} \right) 
 \frac{1}{(\sqrt{2\pi})^n} e^{-\frac{1}{2 \sigma^2} |\bfx|^2}. 
\end{eqnarray*}

Thus the inverse Fourier transform (\ref{1.4}) assumes the form 
\begin{eqnarray} \label{1.5}
&& f(\bfx; \varepsilon,\sigma,\mathcal{P}) \nonumber \\
&&=\frac{1}{(\sqrt{2\pi})^n} e^{-\frac{1}{2} |\bfx|^2} \left\{ 1 +  \frac{(-1)^k \, \varepsilon}{\sigma^{n+2k}}
:\mathcal{P}:\left( \frac{x_1}{\sigma},...,\frac{x_n}{\sigma} \right)  e^{-\frac{1}{2\sigma^2} |\bfx|^2 (1-\sigma^2)}  \right\}. \nonumber \\
\end{eqnarray}

Since, by assumption, $1-\sigma^2 > 0$ the positive constant $K(\sigma,\mathcal{P})$ defined by
\begin{eqnarray} \label{1.6}
 K(\sigma,\mathcal{P}) = \sup_{\bfx \in \mathbb{R}^n} \frac{|:\mathcal{P}:(x_1,...,x_n)|}{\sigma^{n+2k}} e^{-\frac{1}{2}|\bfx|^2 (1-\sigma^2) }
\end{eqnarray}
is finite and for all $\bfx \in \mathbb{R}^n$
\begin{eqnarray*}
 f(\bfx; \varepsilon,\sigma,\mathcal{P}) \geq 0  \mbox{  if  }  |\varepsilon|  \leq K^{-1}(\sigma,\mathcal{P})
\end{eqnarray*}
we observe that $\Phi(\cdot; \varepsilon,\sigma,\mathcal{P})$ is a real characteristic function of the probability 
density function $f(\cdot; \varepsilon,\sigma,\mathcal{P})$ defined by (\ref{1.5}) for any \\
$\varepsilon \in [-K^{-1}(\sigma,\mathcal{P}),K^{-1}(\sigma,\mathcal{P})]$.
Here we have made use of property (\ref{1.2}).  Thus we can summarize the discussion above as a theorem. 

\begin{theo}
Let $0<\sigma<1$, $\mathcal{P}$ be a real homogeneous polynomial in $n$ variables of even degree $2k$, $K(\sigma,\mathcal{P})$ the positive constant defined
 by (\ref{1.6}) and $\varepsilon \in [-K^{-1}(\sigma,\mathcal{P}),K^{-1}(\sigma,\mathcal{P})]$.  Then the function 
$\Phi(\cdot; \varepsilon,\sigma,\mathcal{P} )$ defined by (\ref{1.1}) is the characteristic function of a probability density function
$f(\cdot;\varepsilon,\sigma,\mathcal{P})$ defined by (\ref{1.5}).  Under this density function $f(\cdot;\varepsilon,\sigma,\mathcal{P})$
the linear functional  $\bfx \longmapsto \bfa^T \bfx$ with $|\bfa|=1$ has characteristic function $\varphi_{\bfa}$ and probability density function $g_{\bfa}$ on the real line given respectively by
\begin{eqnarray} \label{1.7}
 \varphi_{\bfa}(t) = e^{-\frac{1}{2} t^2} + \varepsilon \, \, \mathcal{P}(\bfa) e^{-\frac{1}{2} \sigma^2 t^2}  t^{2k}, 
\mbox{   } t \in \mathbb{R}
\end{eqnarray}

\begin{eqnarray} \label{1.8}
 g_{\bfa}(x) =  \frac{1}{\sqrt{2\pi}} \left \{ e^{- \frac{1}{2} x^2} + \frac{(-1)^k \, \varepsilon \, \mathcal{P}(\bfa) }{\sigma^{2k+1}} 
H_{2k} \left(\frac{x}{\sigma}\right) e^{-\frac{1}{2\sigma^2} x^2 }  \right \}.  
\end{eqnarray}

In particular, for any $\bfa \in Z_{\mathcal{P}}$, the linear functional $\bfa^T \bfx$ has the normal distribution
with mean $0$ and variance  $|\bfa|^2$ but $f(\cdot; \varepsilon,\sigma, \mathcal{P})$ is a nongaussian 
density function for any $\varepsilon \in [-K^{-1}(\sigma,\mathcal{P}),K^{-1}(\sigma,\mathcal{P})  ] \smallsetminus \{0\}$.
\end{theo}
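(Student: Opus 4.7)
The bulk of the first claim, that $\Phi(\cdot;\varepsilon,\sigma,\mathcal{P})$ is a characteristic function with density $f(\cdot;\varepsilon,\sigma,\mathcal{P})$, has in fact already been established in the preceding computation: equation (1.5) is the inverse Fourier transform, it is real because of the symmetry (1.2), and the definition (1.6) of $K(\sigma,\mathcal{P})$ — finite precisely because $1-\sigma^{2}>0$ makes the gaussian factor dominate the polynomial $:\mathcal{P}:$ — guarantees $f\ge 0$ in the prescribed range of $\varepsilon$. Normalization to total mass one follows from $\Phi(\bfze;\varepsilon,\sigma,\mathcal{P})=1$. So the plan for this part is simply to assemble these observations.

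For the marginal characteristic function, the plan is to exploit the fact that if $X$ has density $f$, then $E[e^{it\,\bfa^{T}X}]=\Phi(t\bfa;\varepsilon,\sigma,\mathcal{P})$. Using $|\bfa|=1$ one gets $|t\bfa|^{2}=t^{2}$, and the homogeneity of $\mathcal{P}$ of degree $2k$ gives $\mathcal{P}(t\bfa)=t^{2k}\mathcal{P}(\bfa)$. Substituting into (1.1) yields (1.7) directly.

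To recover (1.8), I would invert $\varphi_{\bfa}$ on $\mathbb{R}$ by imitating the computation that led to (1.5), but now in one dimension. The gaussian term inverts to $\mathfrak{N}(x)$. For the perturbation I would write
\begin{eqnarray*}
\frac{1}{2\pi}\int_{\mathbb{R}}e^{-itx}\,t^{2k}e^{-\frac{1}{2}\sigma^{2}t^{2}}\,dt
=\left(i\frac{d}{dx}\right)^{\!2k}\!\frac{1}{\sigma}\,\mathfrak{N}\!\left(\frac{x}{\sigma}\right),
\end{eqnarray*}
then use $i^{2k}=(-1)^{k}$, a chain rule that produces $\sigma^{-2k}$, and the defining relation of the Hermite polynomials to obtain $\frac{(-1)^{k}}{\sigma^{2k+1}}H_{2k}(x/\sigma)\mathfrak{N}(x/\sigma)$. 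Multiplying by $\varepsilon\mathcal{P}(\bfa)$ and combining with $\mathfrak{N}(x)$ yields (1.8).

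The last two statements are then immediate corollaries of (1.7)–(1.8). If $\bfa\in Z_{\mathcal{P}}$ then $\mathcal{P}(\bfa)=0$, so $\varphi_{\bfa}(t)=e^{-t^{2}/2}$, which is the characteristic function of $N(0,1)=N(0,|\bfa|^{2})$; the general case $|\bfa|\ne 1$ follows by rescaling. For nongaussianity of $f$ when $\varepsilon\ne 0$: as $\mathcal{P}\not\equiv 0$ is a nonzero polynomial, we may pick $\bfa$ with $|\bfa|=1$ and $\mathcal{P}(\bfa)\ne 0$, whereupon $\varphi_{\bfa}(t)=e^{-t^{2}/2}+\varepsilon\mathcal{P}(\bfa)t^{2k}e^{-\sigma^{2}t^{2}/2}$ is manifestly not of the form $e^{-ct^{2}/2}$ (for instance by comparing coefficients in the Taylor expansion at $t=0$), so this one-dimensional marginal is nongaussian, and hence so is $f$. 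There is no real obstacle here; the only delicate step is the justification of differentiation under the integral sign in the Fourier inversion, which is routine because of the rapid gaussian decay.
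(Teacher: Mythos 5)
Your proposal is correct and follows essentially the same route as the paper: the first part is read off from the discussion establishing (\ref{1.5}) and (\ref{1.6}), equation (\ref{1.7}) comes from $\varphi_{\bfa}(t)=\Phi(t\bfa;\varepsilon,\sigma,\mathcal{P})$ together with the homogeneity of $\mathcal{P}$, equation (\ref{1.8}) from one-dimensional Fourier inversion, and the last claim from $\mathcal{P}(\bfa)=0$ plus rescaling. You merely spell out the inversion computation and the nongaussianity of $f$ (via a marginal with $\mathcal{P}(\bfa)\neq 0$) in more detail than the paper, which leaves both as immediate.
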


\begin{proof}
The first part is immediate from the discussion preceding the statement of the theorem.  To prove the second
part we note that the characteristic function $\varphi_{\bfa}(t)$ of the linear functional $\bfa^T\bfx$ under the density
function $f(\cdot; \varepsilon,\sigma,\mathcal{P})$ is $\Phi(t\bfa; \varepsilon,\sigma,\mathcal{P})$ and (\ref{1.7}) follows from 
(\ref{1.1}) and the homogeneity of $\mathcal{P}$.  Now (\ref{1.8}) follows from Fourier inversion of (\ref{1.7}).
If $0 \neq \bfa \in Z_{\mathcal{P}}$ then  $0 = \mathcal{P}(\bfa) = \mathcal{P}\left(\frac{\bfa}{|\bfa|}\right)$ and therefore 

\begin{eqnarray*}
 \varphi_{\frac{\bfa}{|\bfa|} }(t) =  e^{-\frac{1}{2} t^2}.
\end{eqnarray*}

\noindent
Hence $\bfa^T \bfx$ is normally distributed with mean $0$  and variance $|\bfa|^2$.
\end{proof}

\begin{coro}
 Let $\{S_j,  1\leq j\leq N\}$ be any finite set of $(n-1)$--dimensional subspaces of $\mathbb{R}^n$.  Then there
exists a nongaussian analytic probability density function whose projection on $S_j$ is gaussian for each 
$j \in \{1,2,...,N\}$.
\end{coro}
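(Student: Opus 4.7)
The plan is to reduce the corollary to Theorem 2.2 by exhibiting a single real homogeneous polynomial $\mathcal{P}$ of even degree which vanishes identically on every $S_j$ simultaneously. Once such a $\mathcal{P}$ is in hand, feeding it into Theorem 2.2 with any nonzero $\varepsilon$ inside the admissible interval produces the desired density.

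First, for each $j\in\{1,2,\ldots,N\}$ I would choose a unit normal vector $\bfa_j$ to $S_j$ and set
\[
\mathcal{P}(\bft) \;=\; \prod_{j=1}^{N}(\bfa_j^T \bft)^{2},\qquad \bft\in\mathbb{R}^n.
\]
This is evidently a real homogeneous polynomial of even degree $2N$ in the $n$ variables $t_1,\ldots,t_n$. Moreover, by construction $\mathcal{P}(\bft)=0$ whenever $\bft\in S_j$ for some $j$, i.e., $\bigcup_{j=1}^{N}S_j\subseteq Z_{\mathcal{P}}$ in the notation of (\ref{1.3}).

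Next I would fix any $\sigma\in(0,1)$, form the constant $K(\sigma,\mathcal{P})$ from (\ref{1.6}), and pick any $\varepsilon\in[-K^{-1}(\sigma,\mathcal{P}),K^{-1}(\sigma,\mathcal{P})]\smallsetminus\{0\}$. Theorem 2.2 then furnishes the analytic, nongaussian probability density $f(\cdot;\varepsilon,\sigma,\mathcal{P})$ on $\mathbb{R}^n$ whose characteristic function is $\Phi(\cdot;\varepsilon,\sigma,\mathcal{P})$ given by (\ref{1.1}).

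The remaining step is to pass from Theorem 2.2, whose conclusion is phrased in terms of $1$-dimensional linear functionals, to the projection onto the $(n-1)$-dimensional subspace $S_j$. The characteristic function of the projection of the measure with density $f(\cdot;\varepsilon,\sigma,\mathcal{P})$ onto $S_j$ is nothing but the restriction of $\Phi(\cdot;\varepsilon,\sigma,\mathcal{P})$ to the subspace $S_j$. Since $\mathcal{P}(\bft)=0$ for every $\bft\in S_j$, formula (\ref{1.1}) collapses on $S_j$ to
\[
\Phi(\bft;\varepsilon,\sigma,\mathcal{P})\;=\;e^{-\frac{1}{2}|\bft|^{2}},\qquad \bft\in S_j,
\]
which is exactly the characteristic function of the standard gaussian on $S_j$. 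Hence the projection on $S_j$ is standard gaussian for every $j$, completing the proof.

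There is no real obstacle here: the only genuine content is the elementary algebraic observation that a finite union of hyperplanes in $\mathbb{R}^n$ is the zero set of a single polynomial, achieved by the product of squared linear forms. The use of \emph{squared} linear forms is what guarantees even degree (so that Theorem 2.2 is applicable). It is also worth noting, as foreshadowing for the converse direction, that this construction manifestly breaks down if $N$ is infinite because an infinite product of linear forms is no longer a polynomial; the rigidity for infinite families will have to be established by quite different means.
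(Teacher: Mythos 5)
Your proof is correct and follows essentially the same route as the paper: both construct a homogeneous polynomial vanishing on $\bigcup_{j=1}^{N} S_j$ as a product of linear forms dual to the chosen normals and then invoke Theorem 2.2 with a nonzero admissible $\varepsilon$. The only (harmless) differences are that you square each factor to force even degree where the paper instead pads the family with one extra hyperplane so that $N$ becomes even, and that you spell out the restriction of $\Phi$ to $S_j$ as the characteristic function of the $(n-1)$--dimensional marginal, a step the paper leaves implicit by concluding only through the one--dimensional linear functionals.
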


\begin{proof}
 By adding one more $(n-1)$--dimensional subspace to the collection $\{S_j,  1\leq j\leq N \}$, if necessary, we may 
assume without loss of generality that $N$ is even.  Choose a unit vector $\bfa^{(j)} \in S^\bot_j$ for each $j$ and define
the homogeneous real polynomial $\mathcal{P}$ of degree $N$ by
\begin{eqnarray*}
 \mathcal{P}(\bft) = \prod^{N}_{j=1}  \bfa^{(j)^T} \bft, \mbox{   } \bft \in \mathbb{R}^n. 
\end{eqnarray*}
Clearly,
\begin{eqnarray*}
 \mathcal{P}(\bft) = 0  \mbox{      if    }  \bft \in \bigcup^{N}_{j=1}  S_j.
\end{eqnarray*}
In other words
\begin{eqnarray*}
 \bigcup^{N}_{j=1} S_j \subset Z_{\mathcal{P}}.
\end{eqnarray*}
If we choose $\mu$ to be the probability measure with the density function $f(\cdot ; \varepsilon,\sigma,\mathcal{P})$,
$0 \neq \varepsilon$ in $[-K^{-1}(\sigma,\mathcal{P}), K^{-1}(\sigma,\mathcal{P})]$ in Theorem 2.2 it 
follows immediately from the last part
of the theorem that every linear functional of the form $\bfb^T \bfx$ has a normal distribution with mean $0$ and
variance $|\bfb|^2$ whenever $\bfb \in Z_{\mathcal{P}}$.  This completes the proof.
\end{proof}

\begin{remark}
 In the context of understanding the modes of the density function $g_{\bfa}(x)$ given by (\ref{1.8}) it is of interest to
note that 
\begin{eqnarray*}
 && \left\{ x  \Bigl |  x  \Bigr. \neq 0, g'_{\bfa} (x) = 0 \right \} = \\
&& \left \{ x  \Bigl |  x  \Bigr. \neq 0, e^{\frac{x^2}{2}(\frac{1}{\sigma^2} -1)}  +  \frac{ (-1)^k \, \varepsilon \mathcal{P} (\bfa) }{\sigma^{2k+2}}  
\frac{H_{2k+1}(\frac{x}{\sigma})}{x} = 0 \right \}.
\end{eqnarray*}
Indeed, this is obtained by straightforward differentiation and using the recurrence relation 
$H_{2k+1}(x) = x H_{2k}(x) - H'_{2k}(x)$.
\end{remark}

\begin{example}
\normalfont
{
 Let $n$ be even,

\begin{eqnarray} \label{1.9}
 \mathcal{P}(t_1,t_2,...,t_n)  &=& t_1 t_2...t_n \prod_{i >j} (t^2_i - t^2_j) \nonumber \\
&=& t_1 t_2...t_n 
\begin{vmatrix}
  1     &   1      &  ...  & 1     \\
  t^2_1 &   t^2_2  &  ...  & t^2_n \\
  t^4_1 &   t^4_2  &  ...  & t^4_n \\
   .    &   .      &  ...  & .      \\
   .    &   .      &  ...  & .      \\
   .    &   .      &  ...  & .      \\
  t^{2(n-1)}_1 &   t^{2(n-1)}_2  &  ...  & t^{2(n-1)}_n 
\end{vmatrix}
\end{eqnarray}
Then $\mathcal{P}$ is a polynomial of even degree $n^2$, which is antisymmetric in the variables
$t_1, t_2,...,t_n$.  The renormalized version $:\mathcal{P}:$ of $\mathcal{P}$ is given by
\begin{eqnarray} \label{1.10}
:\mathcal{P}: (x_1,x_2,...,x_n) = 
\begin{vmatrix}
  H_1(x_1)     &   H_1(x_2)      &  ...  & H_1(x_n)     \\
  H_3(x_1)     &   H_3(x_2)      &  ...  & H_3(x_n) \\
   .    &   .      &  ...  & .      \\
   .    &   .      &  ...  & .      \\
   .    &   .      &  ...  & .      \\
  H_{2n+1}(x_1)     &   H_{2n+1}(x_2)      &  ...  & H_{2n+1}(x_n) 
\end{vmatrix}
\end{eqnarray}

In particular, $:\mathcal{P}:$ is antisymmetric in the variables $x_1,x_2,...,x_n$.  Fixing $0<\sigma<1$ we get for each
$\varepsilon \in [ -K^{-1}(\sigma,\mathcal{P}), K^{-1}(\sigma,\mathcal{P})]$, with $K(\sigma,\mathcal{P})$ being determined
 by (\ref{1.6}),
(\ref{1.10}) and $k=\frac{1}{2}n^2$, the probability density function $f(\cdot ; \varepsilon,\sigma,\mathcal{P})$ given by

\begin{eqnarray} \label{1.11}
 f(\bfx; \varepsilon,\sigma,\mathcal{P})  &=& \frac{1}{(\sqrt{2\pi})^n} \left\{  e^{-\frac{1}{2} |\bfx|^2} \right. \nonumber \\
&& +  \frac{\varepsilon}{\sigma^{n(n+1)}}
\begin{vmatrix}
  H_1(\frac{x_1}{\sigma})     &   H_1(\frac{x_2}{\sigma})      &  ...  & H_1(\frac{x_n}{\sigma})     \\
  H_3(\frac{x_1}{\sigma})     &   H_3(\frac{x_2}{\sigma})      &  ...  & H_3(\frac{x_n}{\sigma}) \\
   .    &   .      &  ...  & .      \\
   .    &   .      &  ...  & .      \\
   .    &   .      &  ...  & .      \\
  H_{2n+1}(\frac{x_1}{\sigma})     &   H_{2n+1}(\frac{x_2}{\sigma})      &  ...  & H_{2n+1}(\frac{x_n}{\sigma})
\end{vmatrix} \nonumber \\
   && \mbox{    } \left. e^{-\frac{1}{2} \sigma^2 |\bfx|^2} \right \}.  
\end{eqnarray}

By Theorem 2.2 and its Corollary we conclude that the projection of this density function on the $(n-1)$--dimensional
hyperplanes $\{\bfx | x_j = 0 \}$, $1 \leq j \leq n$; $\{\bfx | x_i - x_j = 0 \}$, $1 \leq i \leq j \leq n$ and
$\{\bfx | x_i + x_j =  0 \}$, $1 \leq i \leq j \leq n$ are all $(n-1)$--dimensional gaussian densities.

If $g(x_1,x_2,...,x_n)$ is any bounded continuous function which is symmetric in the variables $x_1,x_2,...,x_n$ then
the function $g:\mathcal{P}:$ is an antisymmetric function in $\mathbb{R}^n$ and therefore

\begin{eqnarray*} 
 \int _{\mathbb{R}^n}  (g:\mathcal{P}:) (x_1,x_2,...,x_n) e^{-\frac{1}{2\sigma^2} |\bfx|^2} dx_1 dx_2 ... dx_n = 0.
\end{eqnarray*}
Thus
\begin{eqnarray*} 
 &&\int _{\mathbb{R}^n}  g(x_1,...,x_n) f(\bfx; \varepsilon,\sigma,\mathcal{P}) dx_1 dx_2 ... dx_n  \\
&& = \int _{\mathbb{R}^n}  g(x_1,x_2,...,x_n) \frac{1}{(\sqrt{2\pi})^n} e^{-\frac{1}{2} |\bfx|^2} dx_1 dx_2 ... dx_n. 
\end{eqnarray*}
In other words, for $0 \neq \varepsilon \in [-K^{-1}(\sigma,\mathcal{P}), K^{-1}(\sigma,\mathcal{P})]$, any symmetric measurable
function $g$ on $\mathbb{R}^n$ has the property that its distribution under the nongaussian density function 
$f(\bfx;\varepsilon,\sigma, \mathcal{P} )$ in (\ref{1.11}) is the same as its distribution under the standard gaussian density function
with mean $\bfze$  and covariance matrix $\bfI$.
}
\end{example}

\begin{example}
\normalfont
{
 We now specialize Example 2.5 to the case $n=2$, $\sigma = 2^{-1/2}$ when
\begin{eqnarray*} 
 \mathcal{P} (t_1,t_2) &=& t_1 t_2 ( t^2_1 - t^2_2), \\
:\mathcal{P}:(x_1,x_2) &=&  H_3(x_1) H_1(x_2) - H_1(x_1) H_3(x_2) \\
 &=& x^3_1 x_2 - x^3_2 x_1.
\end{eqnarray*}
A simple computation shows that

\begin{eqnarray*} 
K(\sigma,\mathcal{P}) &=&  8 \sup |x^3_1 x_2 - x^3_2 x_1| \, \, e^{-\frac{1}{4} (x^2_1 + x^2_2)} \\
&=& 128 \, \, e^{-2}.
\end{eqnarray*}
This supremum is easily evaluated by switching over to the polar coordinates $x_1 = r cos \theta$, $x_2 = r sin \theta$.
Then
 \begin{eqnarray} \label{1.12}
  f(\bfx; \varepsilon,\sigma,\mathcal{P}) = \frac{1}{2\pi} e^{-\frac{1}{2}(x^2_1 + x^2_2)} \left\{ 1 +
  32 \varepsilon (x^3_1  x_2 - x^3_2 x_1) e^{-\frac{1}{2}(x^2_1 + x^2_2)}  \right \}
 \end{eqnarray}
which is a probability density function whenever
 \begin{eqnarray*} 
  |\varepsilon| \leq \frac{e^2}{128}.
 \end{eqnarray*}
At $\varepsilon = 0$, it is the standard normal density function with mean $\bfze$ and covariance matrix $\bfI$.  
We write $\eta = 32 \, \, \varepsilon$
and express the density function (\ref{1.12}) as
 \begin{eqnarray} \label{1.13}
f_{\eta}(x_1,x_2) = \frac{1}{2\pi} e^{-\frac{1}{2}(x^2_1 + x^2_2)} \left\{ 1 +
  \eta (x^3_1  x_2 - x^3_2 x_1) e^{-\frac{1}{2}(x^2_1 + x^2_2)}  \right \}  
 \end{eqnarray}
where (See Fig. (\ref{bivariate}).)
 \begin{eqnarray*}
  |\eta|  \leq \frac{e^2}{4}.
 \end{eqnarray*}

\begin{figure}[h!]  
  \centering
    \includegraphics[width=0.8\textwidth]{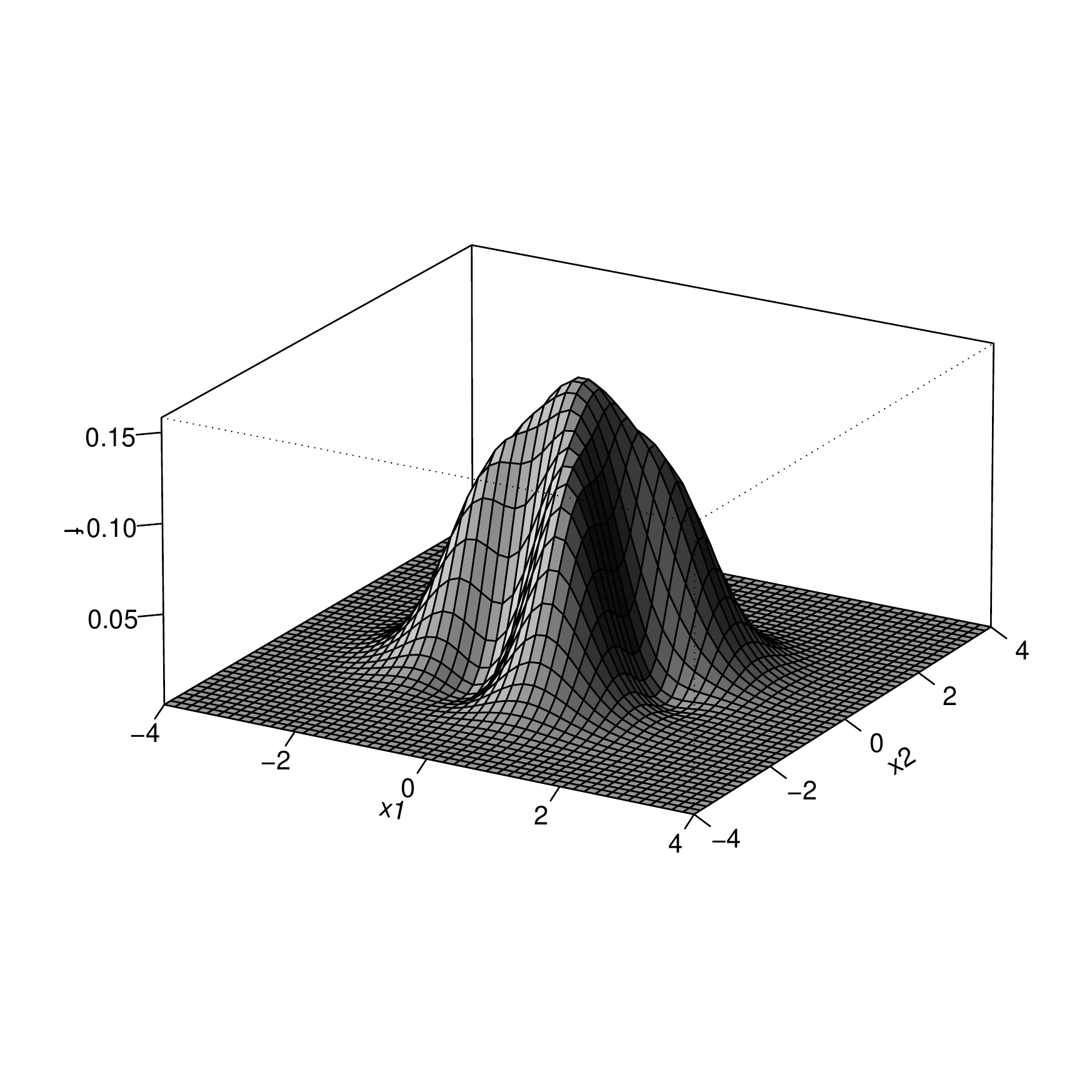}
    \caption{Bivariate density  $f_{\eta}(x_1,x_2)$ at $\eta = e^2 / 4$.}
    \label{bivariate}
\end{figure}

\noindent
When $\bfa = ( sin \theta, cos \theta)$ the density function $g_{\theta}$ of the linear functional
$\bfx \longmapsto x_1 sin  \theta + x_2  cos \theta$, under $f_{\eta}$ is given by the formula (\ref{1.8})
of Theorem 2.2 as
\begin{eqnarray}  \label{1.14}
 g_{\theta} (x) = \frac{1}{\sqrt{2 \pi}} e^{-\frac{1}{2} x^2}  \left\{ 1 - \frac{\sqrt{2} \,\, \eta \,\, sin (4\theta)}{32} (4 x^4 -12 x^2 +3) 
e^{-\frac{1}{2} x^2}  \right\}.
\end{eqnarray}

\noindent
Thus
\begin{eqnarray*}  
 g'_{\theta} (x) = \frac{-x}{\sqrt{2 \pi}} e^{-\frac{1}{2} x^2}  \left\{ e^{\frac{1}{2} x^2} - 
\frac{\sqrt{2} \,\, \eta  \, \,  sin (4\theta) }{16} (4 x^4 -20x^2 + 15) e^{-\frac{1}{2}x^2}  \right\}.
\end{eqnarray*}
It is not difficult to find values of $\eta$ in the range $(0, \frac{1}{4}e^2]$ and angle $\theta$ for which

\begin{eqnarray}  
 \left\{ x \Bigl |    \Bigr. e^{\frac{1}{2}x^2}  - \frac{\sqrt{2} \,\, \eta \,\, sin(4\theta)}{16}   ( 4x^4 -20x^2 +15) = 0,  x \neq 0 \right\} \neq \emptyset.
\end{eqnarray}
This reveals the possibility of nonunimodality of the density of some linear functionals under the joint density
$f_{\eta}$. For an illutration cf. Fig. (\ref{bimodel}).

\begin{figure}[h!]  
  \centering
    \includegraphics[width=0.8\textwidth]{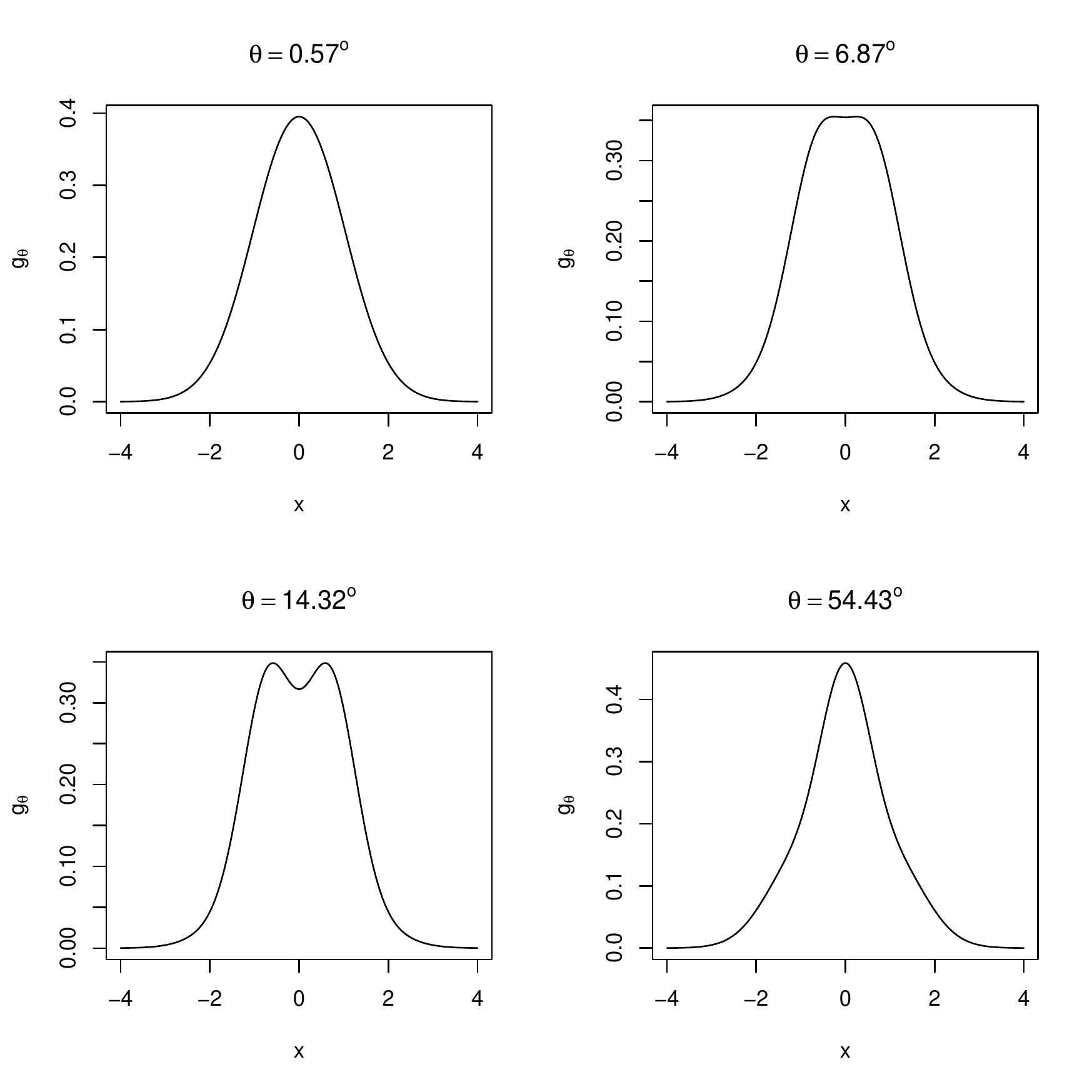}
    \caption{Nonunimodality of $g_{\theta}$. }
    \label{bimodel}
\end{figure}
}
\end{example}

\section{A characterization of gaussian distributions in $\mathbb{R}^n$}
In the context of the Corollary to Theorem 2.2 we have the following characterization of a 
gaussian distribution in $\mathbb{R}^n$ when the number $N$ of $(n-1)$--dimensional subspaces in
the corollary is countably infinite.

\begin{theo}
 Let $\{S_j, j=1,2...\}$ be a countably infinite set of $(n-1)$-- dimensional subspaces of $\mathbb{R}^n$ and let $\mu$
be a probability measure in $\mathbb{R}^n$ whose projection on $S_j$ is gaussian for each $j=1,2,...$ . Then $\mu$ is gaussian.

\begin{proof}
 The fact that the projection of $\mu$ on the two distinct $(n-1)$--dimensional subspaces $S_1$ and $S_2$ are
gaussian implies that the multivariate Laplace transform $\widehat{\mu}$ of $\mu$ given by
\begin{eqnarray} \label{muhat}
 \widehat{\mu}(z_1,...,z_n) =  \int \exp (z_1 x_1 + ... + z_n x_n) \mu(dx_1 dx_2 ... dx_n)
\end{eqnarray}
is well-defined for $\bfz \in \mathbb{C}^n$ and analytic in each of the complex variables $z_j$, $j=1,...,n$. Let $\bfm$ and $\Sigma$
be respectively the mean vector and covariance matrix of the $\mathbb{R}^n$ valued random variable $\bfx$ with distribution $\mu$.

Choose and fix a unit vector $\bfa^{(j)} \in S^\bot_j$ for each $j=1,2,...$ .   Suppose
\begin{eqnarray*}
 \bfa^{(j)^T} &=& (a_{j1},...,a_{jn}), \mbox{   }  j=1,2,...,  \\
\alpha_j &=&  \max_{1 \leq r \leq n } |a_{jr}|.
\end{eqnarray*}
Since 
\begin{eqnarray*}
 \sum^n_{r=1} a^2_{jr} = 1,   \mbox{        }    \forall  j
\end{eqnarray*}
it follows that $\alpha_j \geq n^{-1/2}$,   $\forall$ $j$.  There exists an $r_0$ such that $a_{jr_0} = \alpha_j$
for infinitely many values of $j$. Restricting ourselves to this infinite set of $j$`s and assuming $r_0 =1$ without
loss of generality we may as well assume that
\begin{eqnarray*}
 \bfa^{(j)} &=& (a_{j1},..., a_{jn})^T,  \\
  |a_{j1}|&=&  \max_{1 \leq r \leq n} |a_{jr}|  \mbox{       } \forall \mbox{    } j=1,2,..., \\
 |a_{j1}| &\geq& n^{-1/2}  \mbox{      }  \forall j.
\end{eqnarray*}
Now consider the $(n-1)$-- dimensional vector $\bfb^{(j)}$ defined by

\begin{eqnarray*}
 \bfb^{(j)^T} = \left(\frac{a_{j2}}{a_{j1}},\frac{a_{j3}}{a_{j1}},...,\frac{a_{jn}}{a_{j1}} \right), \mbox{       } j = 1,2,....
\end{eqnarray*}
where
\begin{eqnarray*}
 \left| \frac{a_{jr}}{a_{j1}} \right | \leq 1  \mbox{     }  \forall  \mbox{     } r=2,3,...,n.
\end{eqnarray*}
Thus all the vectors $\bfb^{(j)}$ are distinct and they constitute a bounded countable set in $\mathbb{R}^{(n-1)}$.
Define the set
\begin{eqnarray*}
 \mathbb{D} = \bigcap_{j<i} \left\{ \bfs \Bigl | \Bigr. \bfs \in \mathbb{R}^{(n-1)}, (\bfb^{(j)} - \bfb^{(i)})^T \bfs \neq 0 \right\}.
\end{eqnarray*}
Being a countable intersection of dense open sets it follows from the Baire category theorem that $\mathbb{D}$ is dense
in $\mathbb{R}^{(n-1)}$.  Let now
\begin{eqnarray*}
 \bfs = (s_2,s_3,...,s_n)^T  \in \mathbb{R}^{(n-1)}
\end{eqnarray*}
be any fixed point in $\mathbb{D}$.  Define
\begin{eqnarray*}
 s_{j1} = - \bfb^{(j)^T} \bfs,  \mbox{     }  j=1,2,... \mbox{   } .
\end{eqnarray*}
By the definition of $\mathbb{D}$, $\{ s_{j1}, j =1,2,...\}$ is a bounded and countably infinite set of distinct points on the real line.
Furthermore
\begin{eqnarray*}
 a_{j1}s_{j1} +a_{j2}s_{2} + ... + a_{jn} s_{n} = 0 \mbox{      } \forall \mbox{      } j.
\end{eqnarray*}
In other words, $(s_{j1},s_2,...,s_n)^T \in S_j$ for each $j$.  By hypothesis the linear functional $s_{j1}x_1 + s_2 x_2 +...+ s_n x_n$
has a normal distribution with mean $s_{j1} m_1 + s_2 m_2 +...+s_n m_n$ and variance $(s_{j1},s_2,...,s_n) \Sigma (s_{j1},s_2,...,s_n)^T$.
Defining
\begin{eqnarray*}
 \psi (z_1,...,z_n) = \exp ( \bfm^T \bfz + \frac{1}{2} \bfz^T \Sigma \bfz ), \mbox{      } \bfz \in \mathbb{C}^n
\end{eqnarray*}
we conclude that the Laplace transform $\widehat{\mu}$ defined by (\ref{muhat}) and the function $\psi$ satisfy the relation
\begin{eqnarray*}
 \widehat{\mu} (s_{j1},s_2,...,s_n) = \psi (s_{j1},s_2,...,s_n)
\end{eqnarray*}
for $j=1,2,...$  .  Since $\widehat{\mu}(z,s_2,...,s_n)$ and $\psi(z,s_2,...,s_n)$ are analytic functions of $z$ in the whole complex
plane and they agree on the infinite bounded set $\{s_{j1}, j=1,2,...\}$ it follows that
\begin{eqnarray*}
 \widehat{\mu} (z,s_2,...,s_n) = \psi (z,s_2,...,s_n) \mbox{      } \forall  \mbox{      }  z \in \mathbb{C}.
\end{eqnarray*}
 Since this holds for all $(s_2,...,s_n)^T \in \mathbb{D}$ which is dense in $\mathbb{R}^{(n-1)}$ and both sides of the
equation are continuous on $\mathbb{R}^n$ we have
\begin{eqnarray*}
 \widehat{\mu} (s_1,s_2,...,s_n) = \psi (s_1,s_2,...,s_n)
\end{eqnarray*}
for all $(s_1,s_2,...,s_n)^T \in \mathbb{R}^n$.  This implies that $\mu$ is a gaussian measure with mean vector
$\bfm$ and covariance matrix $\Sigma$.
\end{proof}
\end{theo}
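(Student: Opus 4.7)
The plan is to exploit the fact that each projection being gaussian on an $(n-1)$-dimensional subspace forces enough moments (indeed, exponential moments) to exist so that the multivariate Laplace transform $\widehat{\mu}(\bfz)$ extends to an entire function on $\mathbb{C}^n$, and then to combine this analyticity with the countable infinitude of the family $\{S_j\}$ via an accumulation/identity theorem argument. Let $\bfm$ and $\Sigma$ denote the mean and covariance of $\mu$ (finite by the existence of two gaussian projections along non-collinear directions), and let $\psi(\bfz) = \exp(\bfm^T\bfz + \tfrac12\bfz^T\Sigma\bfz)$ be the Laplace transform of the candidate gaussian with these parameters. The goal is to show $\widehat{\mu} \equiv \psi$ on $\mathbb{R}^n$.

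For each $j$ I would choose a unit normal $\bfa^{(j)} = (a_{j1},\dots,a_{jn})^T \in S_j^\perp$, so that gaussianity of the projection on $S_j$ means the linear functional $\sum_r s_r x_r$ is normal whenever $(s_1,\dots,s_n) \perp \bfa^{(j)}$, and hence $\widehat{\mu}(\bfs) = \psi(\bfs)$ for all such $\bfs$. By a pigeonhole argument on which coordinate of $\bfa^{(j)}$ is largest in absolute value, I can pass to an infinite subsequence (say $r_0 = 1$) for which $|a_{j1}| = \max_r |a_{jr}| \geq n^{-1/2}$. This lets me rescale by $a_{j1}$ and form the vectors $\bfb^{(j)} = (a_{j2}/a_{j1},\dots,a_{jn}/a_{j1})^T \in \mathbb{R}^{n-1}$, which are distinct and lie in the bounded cube $[-1,1]^{n-1}$.

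The crucial step, and what I expect to be the main technical obstacle, is producing, for a generic $(s_2,\dots,s_n) \in \mathbb{R}^{n-1}$, infinitely many distinct values $s_{j1} := -\bfb^{(j)^T}\bfs$ that accumulate on a bounded set. For this I would introduce
\begin{equation*}
\mathbb{D} \;=\; \bigcap_{i<j} \bigl\{\bfs \in \mathbb{R}^{n-1} : (\bfb^{(j)} - \bfb^{(i)})^T \bfs \neq 0\bigr\},
\end{equation*}
which is a countable intersection of dense open sets and therefore dense in $\mathbb{R}^{n-1}$ by the Baire category theorem. For any $\bfs \in \mathbb{D}$ the scalars $s_{j1}$ are all distinct; since the $\bfb^{(j)}$ are bounded, so are the $s_{j1}$, and hence the sequence $\{s_{j1}\}$ has an accumulation point in $\mathbb{R}$.

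Once this set $\mathbb{D}$ is in hand, the proof concludes quickly. Fix $\bfs \in \mathbb{D}$. By construction $(s_{j1}, s_2,\dots,s_n) \in S_j$ for every $j$, so $\widehat{\mu}(s_{j1},s_2,\dots,s_n) = \psi(s_{j1},s_2,\dots,s_n)$ for all $j$. Both functions are entire in their first slot, and they coincide on a bounded infinite set of points; by the identity theorem for analytic functions of one complex variable, they coincide identically in that variable for this fixed $(s_2,\dots,s_n)$. Letting $(s_2,\dots,s_n)$ range over the dense set $\mathbb{D}$ and using continuity of both $\widehat{\mu}$ and $\psi$ on $\mathbb{R}^n$ extends the equality to all of $\mathbb{R}^n$. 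Thus the characteristic function of $\mu$ agrees with that of the gaussian with parameters $(\bfm,\Sigma)$, proving $\mu$ is gaussian.
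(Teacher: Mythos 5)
Your proposal is correct and follows essentially the same route as the paper's proof: entire extension of the Laplace transform from two gaussian projections, the pigeonhole reduction to $|a_{j1}|=\max_r|a_{jr}|$, the bounded distinct vectors $\bfb^{(j)}$, the Baire-category dense set $\mathbb{D}$, the identity theorem in the first variable, and the density/continuity argument to conclude. No gaps to report.
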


\noindent
{\bf Acknowledgement}

\vspace{0.3cm}
We thank S. Ramasubramanian for bringing our attention to the example of E. Nelson on page $99$ of W.Feller \cite{wf66} and
J. Stoyanov for his useful suggestions.

\end{document}